\documentclass[12pt]{article}
\usepackage{amsmath,amssymb,amsthm,cite,comment}

\newtheorem{theorem}{Theorem}[section]
\newtheorem{thmy}{Theorem}

\newtheorem{lemma}[theorem]{Lemma}
\newtheorem{corollary}[theorem]{Corollary}

\newcommand{\dd}{\displaystyle }

\def\barr{\begin{array}}
\def\earr{\end{array}}

\title{Two new functions related to the sum of element orders of a finite group}
\author{Marius T\u arn\u auceanu}
\date{August 7, 2026}

\begin{document}

\maketitle

\begin{abstract}
Let $G$ be a finite group and $H$ be a proper normal subgroup of $G$. In this note, we introduce two new functions $o(G,H)$ and $\psi''(G,H)$ involving the sums of element orders of $G$ and $H$. We prove that they satisfy certain inequalities, with equality if and only if $(G,H)$ is an equal order pair of a particular type.
\end{abstract}

{\small
\noindent
{\bf MSC2020\,:} Primary 20D60; Secondary 20E34.

\noindent
{\bf Key words\,:} element orders, finite groups, equal order pairs.}

\section{Introduction}

Let $G$ be a finite group. In 2009, H. Amiri, S.M. Jafarian Amiri and I.M. Isaacs introduced in their paper \cite{1} the function
\begin{equation}
\psi(G)=\sum_{x\in G}o(x),\nonumber
\end{equation}where $o(x)$ denotes the order of $x$ in $G$. They proved the following basic theorem.

\begin{thmy}
If $G$ is a finite group of order $n$, then $\psi(G)\leq\psi(C_n)$, and we have equality if and only if $G$ is cyclic.
\end{thmy}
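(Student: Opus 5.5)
We prove the statement by strong induction on $n=|G|$, following the strategy of Amiri, Jafarian Amiri and Isaacs. The basic tool is the observation that for any subgroup $P$ of $G$ and any $y\in C_G(P)$ of order coprime to $|P|$ we have $o(xy)=o(x)o(y)$ for all $x\in P$. Let $p$ be the largest prime divisor of $n$ and let $P\in{\rm Syl}_p(G)$, with $|P|=p^a$ and $n=p^am$, $\gcd(p,m)=1$.

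The equality case is immediate: $\psi$ depends only on the multiset of element orders, so $\psi(G)=\psi(C_n)$ whenever $G\cong C_n$.

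For the inequality, we split according to whether $P$ has a normal cyclic complement.

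\emph{Case 1: $P$ is cyclic and normal, and has a complement.} In this case the Schur--Zassenhaus theorem gives $G=P\rtimes M$ with $|M|=m$. Every $x\in G$ factors uniquely as $x=uy$ with $u\in P$ and $y\in M$. Since $p\nmid m$, the element $x^{o(y)}$ lies in $P$ and hence has order at most $p^a$. This yields the crude bound
\[
o(x)\le o(y)\,p^a .
\]
To do better, we refine it using $C_P(y)$: for $u\in P$ we have $o(uy)=o(y)\,o(u')$, where $u'$ is the $p$-part of $uy$, which lies in $C_P(y)$. Summing over $x$ then gives
\[
\psi(G)\le \psi(P)\,\psi(M)\le\psi(C_{p^a})\,\psi(C_m)=\psi(C_n),
\]
using induction on $M$ and the multiplicativity of $\psi$ on cyclic groups of coprime orders. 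Equality forces $M$ to be cyclic and $y$ to centralize $P$, so $G$ is cyclic.

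\emph{Case 2: otherwise.} Here we aim for a strict inequality through the counting estimate
\[
\psi(G)<\frac{1}{q-1}\,\psi(C_n),
\]
where $q$ is the smallest prime divisor of $n$, or more concretely through $\psi(G)<\tfrac{7}{11}\psi(C_n)$ when $G$ is noncyclic. First we compute a lower bound for $\psi(C_n)$: since $\psi(C_n)=\prod_{p^a\| n}\psi(C_{p^a})$ and $\psi(C_{p^a})=\frac{p^{2a+1}+1}{p+1}$, we get
\[
\psi(C_n)\ge \frac{n^2}{\prod_{p\mid n}(1+1/p)}.
\]
Next, if $G$ is noncyclic, then for each $x$ the cyclic subgroup $\langle x\rangle$ is proper, and one would like to bound the sum by the contribution of elements of large order. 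If some $x$ has order $n/k$ with $k$ small, then $\langle x\rangle$ has small index, and the cyclic-subgroup-of-small-index analysis (index $2$ or a prime $r$) reduces the situation to Case 1 or to a direct computation. If no element has large order, then every $o(x)\le n/r$, where $r$ is the least prime, and so $\psi(G)\le n^2/r$. This must be compared with $\psi(C_n)$, and that comparison is the main obstacle. The naive bound fails for small primes, since $\prod(1+1/p)$ can be large. I would handle this as in the original paper: use that an element of order exceeding $n/p$ generates a cyclic normal-ish subgroup of index less than $p$, apply induction to that subgroup, and exploit the fact that $p$ is the largest prime so that the index is coprime to $p$. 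This returns us to Case 1 with $P$ cyclic and normal, where Schur--Zassenhaus again supplies the complement.

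Finally, combining the two cases gives $\psi(G)\le\psi(C_n)$, with equality only in Case 1 when $G$ is cyclic.
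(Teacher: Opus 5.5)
Your route, the original Amiri--Jafarian Amiri--Isaacs induction via the largest prime $p$, differs from the paper's. The paper gets Theorem A in one line from Theorem B: summing $o(x)\le o(f(x))$ over $x\in G$ gives $\psi(G)\le\psi(C_n)$. Equality forces $o(x)=o(f(x))$ for all $x$, so the preimage of a generator of $C_n$ has order $n$. Your route is elementary, but as written Case 2 never closes. You bound $\psi(G)\le n^2/r$ with $r$ the \emph{least} prime and correctly note that this cannot beat $\psi(C_n)$. You call the comparison ``the main obstacle'', but your last paragraph only handles the opposite situation (an element of order $>n/p$), and the comparison itself is never done. What is missing is the following.
(i) If $|G:\langle x\rangle|<p$, this index is prime to $p$, so $\langle x\rangle$ contains a Sylow $p$-subgroup $P$, which is cyclic and normalized by $\langle x\rangle$. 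Hence $n_p\le|G:\langle x\rangle|<p$ and $n_p\equiv 1 \pmod p$, so $P$ is normal, which is impossible in Case 2. No induction on $\langle x\rangle$ is involved.
(ii) So in Case 2 every $o(x)\le n/p$, and $\psi(G)\le n^2/p$.
(iii) Finally, $\psi(C_n)>n^2\prod_{q\mid n}\frac{q}{q+1}\ge n^2\prod_{k=2}^{p}\frac{k}{k+1}=\frac{2n^2}{p+1}\ge\frac{n^2}{p}$.

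Step (iii), a telescoping product, is exactly where ``$p$ is the largest prime'' is used. It is not used, as you suggest, to make the index coprime to $p$. The bounds $\frac{1}{q-1}\psi(C_n)$ and $\frac{7}{11}\psi(C_n)$ you mention are later, much harder results; they are not available here and not needed.

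Case 1 also has a hole: summing your pointwise information does not give $\psi(G)\le\psi(P)\psi(M)$. For $y=1$, the bound $o(uy)\le o(y)|C_P(y)|$ sums over $u\in P$ to $|P|^2>\psi(P)$. You need the exact coset count. The $p'$-parts of the elements of $Py$ are the $|P:C_P(y)|$ conjugates $y^g$ with $g\in P$, since complements of $P$ in $P\langle y\rangle$ are $P$-conjugate. The elements with $p'$-part $y^g$ are the $wy^g$ with $w\in C_P(y^g)=C_P(y)$. Therefore $\sum_{x\in Py}o(x)=o(y)\,|P:C_P(y)|\,\psi(C_P(y))$. Then $\psi(C_{p^a})-p^{a-b}\psi(C_{p^b})=\frac{(p^{a+b+1}-1)(p^{a-b}-1)}{p+1}\ge 0$, with equality iff $b=a$. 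This gives $\psi(G)\le\psi(P)\psi(M)$, with equality iff $M$ centralizes $P$. With these two pieces supplied, your induction and your treatment of the equality case go through.
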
Since then many authors have studied the properties of the function $\psi(G)$ and its relations with the structure of $G$. Many other functions that are connected to $\psi(G)$ have been also studied, such as $o(G)=\frac{\psi(G)}{|G|}$ in \cite{11,12,13,14,16,22}, $\psi'(G)=\frac{\psi(G)}{\psi(C_{|G|})}$ in \cite{4,5,8,9,10,21} or $\psi''(G)=\frac{\psi(G)}{|G|^2}$ in \cite{20}.

Note that Theorem A follows from the next result.\newpage

\begin{thmy}
If $G$ is a finite group of order $n$, then there is a bijection $f:G\longrightarrow C_n$ such that $o(x)$ divides $o(f(x))$, for all $x\in G$.
\end{thmy}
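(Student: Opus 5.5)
The plan is to derive the statement from Frobenius' theorem: for every divisor $d$ of $|G|=n$, the number of solutions of $x^d=1$ in $G$ is a multiple of $d$. Combined with Hall's marriage theorem, this should produce the bijection. For $d\mid n$ put
$$A_d=\{x\in G : x^d=1\},\qquad B_d=\{y\in C_n : y^d=1\}.$$
Then $|B_d|=d$, and by Frobenius $|A_d|\ge d$. Consider the bipartite graph on $G\sqcup C_n$ in which $x\in G$ is joined to $y\in C_n$ whenever $o(x)\mid o(y)$. The required bijection $f$ is exactly a perfect matching of this graph. Since both sides have $n$ elements, it suffices to verify Hall's condition for subsets $S\subseteq G$.

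For $S\subseteq G$, the neighbourhood $N(S)$ is the set of $y\in C_n$ whose order lies in the up-closed set $U\subseteq\{d: d\mid n\}$ generated by $\{o(x):x\in S\}$. Moreover $S$ is contained in the set of elements of $G$ whose order lies in $U$. So Hall's condition follows from
$$\#\{x\in G: o(x)\in U\}\le \#\{y\in C_n: o(y)\in U\}$$
for every up-closed $U$. Passing to complements, this is equivalent to
$$\#\{x\in G: o(x)\in D\}\ \ge\ \#\{y\in C_n: o(y)\in D\}$$
for every down-closed set $D$ of divisors of $n$. If $D$ is generated by its maximal elements $d_1,\dots,d_k$, the two sides are $|A_{d_1}\cup\dots\cup A_{d_k}|$ and $|B_{d_1}\cup\dots\cup B_{d_k}|$. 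For $k=1$ the inequality is exactly Frobenius' theorem.

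The main obstacle is the case $k\ge 2$, because inclusion--exclusion has mixed signs. One has $A_{d}\cap A_{e}=A_{\gcd(d,e)}$, and likewise for $B$, but the lower bounds $|A_m|\ge m$ cannot be inserted directly into terms carrying negative signs. I would first try induction on $k$. Write
$$\Big|\bigcup_{i\le k}A_{d_i}\Big|=\Big|\bigcup_{i<k}A_{d_i}\Big|+|A_{d_k}|-\Big|\bigcup_{i<k}A_{\gcd(d_i,d_k)}\Big|.$$
The same identity holds for the $B$'s. It would then suffice to show that the \emph{increment} $|A_{d_k}|-|\bigcup_{i<k}A_{\gcd(d_i,d_k)}|$ dominates the corresponding increment for $C_n$. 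The last term involves only elements of the group $A_{d_k}$ generated inside $\langle A_{d_k}\rangle$, which suggests a nested induction on $n$ or on $d_k$.

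If this route stalls, I would switch to a more arithmetic argument. Write $|A_d|=d\,m_d$ with integers $m_d\ge 1$. Express the number of elements of $G$ of order exactly $d$ via M\"obius inversion as $\sum_{e\mid d}\mu(d/e)\,e\,m_e$, and compare it with $\varphi(d)$. The fallback plan is then to build $f$ greedily: process divisors from the largest downward, and at each stage match the elements of $G$ of order $d$ into the still-unused elements of $C_n$ of order divisible by $d$. Frobenius' inequality applied to $A_d$ would be used to show that enough such targets always remain.
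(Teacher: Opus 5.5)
The paper contains no proof of Theorem B. It quotes the result as the answer to Isaacs' Kourovka problem 18.1: it was proved for particular groups by Ladisch and by Amiri--Jafarian Amiri, and in general only recently by M. Amiri. Your proposal therefore has to stand on its own, and it is not yet a proof. Your Hall reduction is correct. In fact it shows that Theorem B is \emph{equivalent} to the inequality $\#\{x\in G: o(x)\in D\}\ge\sum_{d\in D}\varphi(d)$ for every down-closed set $D$ of divisors of $n$. But for non-principal $D$ (your case $k\ge 2$) this inequality is the entire content of the theorem, and you only sketch two possible attacks on it. Frobenius' theorem covers only $k=1$.

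Neither attack works as stated.

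\emph{First route.} The ``increment'' condition you reduce to is false for every ordering of the generators. Take $G=C_{pq}\times C_{pq}$ with $p\ne q$ primes, $n=p^2q^2$, and $D$ generated by $p^2q$ and $pq^2$. Since $\exp G=pq$, we have $A_{p^2q}=A_{pq^2}=A_{pq}=G$. So the increment for $G$ is $0$ whichever generator comes last, while for $C_n$ it is $pq(p-1)$ or $pq(q-1)$, both positive. The union inequality itself does hold here, with slack $pq(p-1)(q-1)$, so the induction as set up cannot reach it. Moreover $A_{d_k}$ is not a subgroup in general, so there is no smaller group to induct into.

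\emph{Second route.} You rely on Frobenius' theorem to guarantee that enough targets remain, but the Hall condition does not follow from the Frobenius divisibilities. Consider the abstract order data for $n=12$ with $1,3,2,0,0,6$ elements of orders $1,2,3,4,6,12$. These give $|A_d|=1,4,3,4,6,12$ for $d=1,2,3,4,6,12$, each divisible by $d$. Yet no order-divisible bijection onto $C_{12}$ exists, since $C_{12}$ has only $\varphi(12)=4$ elements whose order is divisible by $12$. A greedy assignment would also need an argument that early choices do not exhaust targets needed later, and that is again the down-set inequality.

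So the missing ingredient is genuinely group-theoretic control of $|A_{d_1}\cup\dots\cup A_{d_k}|$ beyond Frobenius' theorem. At the very least you need that elements of order $d$ come in blocks of $\varphi(d)$ generators of cyclic subgroups, and that alone is not shown to suffice. This is exactly the step that kept Isaacs' question open.
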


This has been formulated as a question by I.M. Isaacs (see Problem 18.1 in \cite{17}) and proved for some particular groups by F. Ladisch \cite{15} and M. Amiri and S.M. Jafarian Amiri \cite{2}. A proof for arbitrary groups has been recently given by M. Amiri \cite{3}. 

In the current note, given a proper normal subgroup $H$ of $G$, we consider the functions
\begin{equation}
o(G,H)=\frac{\psi(G)-\psi(H)}{|G|-|H|} \mbox{ and } \psi''(G,H)=\frac{\psi(G)-\psi(H)}{|G|^2-|H|^2}\,.\nonumber
\end{equation}These relative invariants present a natural extension of the classical sum of element orders $\psi(G)$ and the average element order $o(G) = \frac{\psi(G)}{|G|}$. Specifically, notice that $\psi(G)-\psi(H)$ computes the sum of the orders of all elements in $G \setminus H$. Consequently, the invariant $o(G,H)$ represents the arithmetic mean of the orders of elements lying outside of $H$. When $H=1$, $o(G,1)=\frac{\psi(G)-1}{|G|-1}$ naturally reduces to the average order of the non-identity elements of $G$. Similarly, $\psi''(G,H)$ normalizes this sum over the difference of squares $|G|^2-|H|^2$, a scaling that provides finer structural control when investigating the boundary cases of Camina pairs and Frobenius groups.

Our main result is stated as follows.

\begin{theorem}
Let $G$ be a finite group and $H$ be a proper normal subgroup of $G$. Then
\begin{equation}
o(G,H)\geq o(G/H,1)
\end{equation}and
\begin{equation}
\psi''(G,H)\leq\psi''(G/H,1).
\end{equation}Assume that $H$ is non-trivial. Then we have equality 
in (1) if and only if $(G,H)$ is an equal order pair with $H$ an isolated subgroup, and in (2) if and only if $G\cong C_{p^n}$ with $p$ prime, $n\geq 2$, and $H=p^{n-i}G\cong C_{p^i}$, $i=1,\dots,n-1$, or $G\cong Q_{2^n}$ with $n\geq 3$ and $H=Z(G)\cong C_2$.
\end{theorem}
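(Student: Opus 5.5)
The plan is to compare each element $x\in G\setminus H$ with its image $xH\in G/H$. Put $m=|G:H|$ and $h=|H|$, and use $|G|-|H|=h(m-1)$ and $|G|^2-|H|^2=h^2(m^2-1)$. The set $G\setminus H$ is the union of the $m-1$ nontrivial cosets of $H$, each of size $h$. For every $x$ we have $o(xH)\mid o(x)$. Moreover $o(x)/o(xH)$ is the order of $x^{o(xH)}\in H$, so it divides $\exp(H)\le h$. Hence
\begin{equation}
o(xH)\le o(x)\le h\,o(xH)\quad\mbox{for all } x\in G.\nonumber
\end{equation}
Summing over $G\setminus H$ coset by coset gives
\begin{equation}
h\bigl(\psi(G/H)-1\bigr)\le \psi(G)-\psi(H)\le h^2\bigl(\psi(G/H)-1\bigr).\nonumber
\end{equation}
Dividing the left inequality by $h(m-1)$ yields (1). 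Dividing the right inequality by $h^2(m^2-1)$ yields (2), because $\psi''(G/H,1)=(\psi(G/H)-1)/(m^2-1)$.

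For equality in (1), the argument forces $o(x)=o(xH)$ for every $x\in G\setminus H$. I expect this to be exactly the definition of an equal order pair with $H$ isolated (as defined later in the paper), so that case is just a matter of unwinding definitions.

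Equality in (2) holds iff $o(x)=h\,o(xH)$ for every $x\notin H$. That is, $x^{o(xH)}$ generates $H$ for all such $x$, so in particular $H$ is cyclic. The main work is classifying the pairs with this property, and I would proceed in three steps.
\begin{itemize}
\item[(a)] \emph{Every element of prime order lies in $H$.} If $x\notin H$ had prime order $r$, then $o(xH)=r$ and $x^r=1$ would have to generate $H\neq1$, which is impossible. Since $H$ is cyclic, $G$ therefore has a unique subgroup of each prime order.
\item[(b)] \emph{$G$ is a $p$-group.} Take $x\notin H$ with $o(xH)=r$ prime, and write $x=x_rx_{r'}$ as a product of its $r$-part and $r'$-part. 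The order of $x_{r'}H$ divides $r$ and is prime to $r$, so $x_{r'}\in H$. Then $x_rH=xH$ has order $r$, and $x_r^r$ generates $H$, so $H$ is an $r$-group. Every prime divisor $s$ of $|G|$ divides $|H|$ by (a), since an element of order $s$ exists and lies in $H$. Hence $G$ is an $r$-group.
\item[(c)] \emph{Identify $G$.} A $p$-group with a unique subgroup of order $p$ is cyclic or generalized quaternion.
\end{itemize}

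In the cyclic case $G\cong C_{p^n}$, every nontrivial proper $H$ works: for $x\notin H$, the subgroup $\langle x^{o(xH)}\rangle$ has order $o(x)/o(xH)=|H|$ and so equals $H$. This gives $n\ge2$ and $H=p^{n-i}G$ for $1\le i\le n-1$.

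In the case $G\cong Q_{2^n}=\langle a,b\rangle$ with $n\ge3$, $b$ has order $4$ and $b\notin\langle a\rangle$, so $H\subseteq\langle b\rangle$. Both $H\le\langle b\rangle$ and $H\neq\langle b\rangle$ hold: the latter because $\langle b\rangle$ is not normal, or alternatively because $b^{o(bH)}$ would then be trivial. Hence $H=\langle b^2\rangle=Z(G)$. Conversely, every $x\notin Z(G)$ satisfies $o(x)=2\,o(xZ(G))$, since $x^{o(xZ)}$ is a nontrivial central element.

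The step I expect to be the main obstacle is (b), showing that $G$ is a $p$-group. The decomposition argument above is what I would try first.
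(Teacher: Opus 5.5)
Your proof is correct in substance, apart from one slip in the $Q_8$ case described below. It differs from the paper's in how equality in (2) is classified.

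\textbf{Comparison with the paper.} The paper bounds each coset sum between $|H|$ times the minimal and maximal element orders in that coset. It then uses $o(y_iH)\le o(y_i)$ and $o(z_i)\le |H|\,o(z_iH)$. Your pointwise bound $o(xH)\le o(x)\le |H|\,o(xH)$ is the same estimate, stated more cleanly. For equality in (2), the paper rephrases the condition as an equal order pair with $o(x)=|H|\,o(xH)$. It deduces $H=\langle x^{o(xH)}\rangle\le\langle x\rangle$ for every $x\notin H$, so $H$ is comparable with every cyclic subgroup. It then cites Theorem 1.1 of \cite{19} on such breaking points to get $C_{p^n}$ or $Q_{2^n}$, and calls the identification of $H$ and the converse obvious.

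Your steps (a)--(c) replace that external citation with a self-contained argument. It uses only Cauchy's theorem, the $r$-part/$r'$-part decomposition, and Burnside's classification of $p$-groups with a unique subgroup of order $p$. Your route is more elementary and makes the determination of $H$ explicit. The paper's route is shorter and keeps the equal-order-pair viewpoint that motivates the paper.

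For (1), the unwinding you defer takes one line. The subgroup $\langle x\rangle\cap H=\langle x^{o(xH)}\rangle$ has order $o(x)/o(xH)$. So $o(x)=o(xH)$ for all $x\notin H$ is exactly isolatedness. It also forces every element of $xH$ to have order $o(xH)=o(x)$, which gives the equal order pair property.

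\textbf{A slip in the quaternion case.} The inclusion $H\le\langle b\rangle$ is only available once you know $b\notin H$. Your reason for excluding $H=\langle b\rangle$ is that $\langle b\rangle$ is not normal, and this is false for $n=3$: in $Q_8$ the subgroup $\langle b\rangle$ has index $2$. Your alternative reason does not work either: if $H=\langle b\rangle$ then $o(bH)=1$ and $b^{o(bH)}=b$, which is not trivial.

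Here is a fix valid for all $n\ge 3$.
\begin{itemize}
\item A group is never the union of two proper subgroups, so there is some $x\notin H\cup\langle a\rangle$.
\item Every element outside $\langle a\rangle$ has order $4$, so $o(x)=4$.
\item Since $x^{o(xH)}$ must generate $H\neq 1$, we cannot have $o(xH)=4$. Hence $o(xH)=2$ and $H=\langle x^2\rangle=Z(G)$.
\end{itemize}

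\textbf{A small omission in the cyclic converse.} The equality $o(x)/o(xH)=|H|$ needs a justification. Subgroups of $C_{p^n}$ form a chain, so $x\notin H$ gives $H<\langle x\rangle$, and hence $o(xH)=o(x)/|H|$.
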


Recall that $(G,H)$ is called an \textit{equal order pair} if for every $x\in G\setminus H$ all the elements of the coset $xH$ have
the same order as $x$, and that $H$ is called an \textit{isolated subgroup} if for every $x\in G\setminus H$ we have $\langle x\rangle\cap H=1$. Equal order pairs have been recently introduced (see \cite{7}) as a generalization of Camina pairs. Theorem 1.1 reveals two particular types of such pairs that will be investigated in the first part of Section 2, while in the second part we will give the proofs of our main results.\newpage

The inequalities (1) and (2) can be used to obtain lower/upper bounds of the function $\psi(G)$ when we do not have an explicit formula. Many such examples are obtained when $G$ is an extension of a group $H$ by a group $K$ such that both $\psi(H)$ and $\psi(K)$ are known. A particular case is as follows.

\begin{corollary}
Let $G$ is a finite metabelian group and $H$ be an abelian normal subgroup of $G$ such that $G/H$ is abelian. Then
\begin{equation}
\psi(H)+|H|(\psi(G/H)-1)\leq\psi(G)\leq\psi(H)+|H|^2(\psi(G/H)-1).\nonumber
\end{equation}
\end{corollary}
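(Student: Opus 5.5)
The corollary follows directly from Theorem 1.1 applied to the pair $(G,H)$. Nothing beyond normality of $H$ is needed; the metabelian hypothesis only ensures that $\psi(H)$ and $\psi(G/H)$ are computable, since both groups are abelian. The plan is to clear denominators in (1) and (2), using that the denominators factor through $|H|$.

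First I dispose of the degenerate case $H=G$, which Theorem 1.1 does not cover because it requires $H$ proper. Then $G/H$ is trivial, so $\psi(G/H)=1$, and both bounds collapse to $\psi(G)=\psi(H)$. This holds trivially.

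Now assume $H$ is a proper normal subgroup and put $m=|G/H|\geq 2$, so that $|G|=m|H|$. For the lower bound I use (1) with $o(G/H,1)=\frac{\psi(G/H)-1}{m-1}$ and the factorization $|G|-|H|=|H|(m-1)$. Inequality (1) then reads
\begin{equation}
\frac{\psi(G)-\psi(H)}{|H|(m-1)}\geq\frac{\psi(G/H)-1}{m-1}.\nonumber
\end{equation}
Multiplying by the positive quantity $|H|(m-1)$ gives $\psi(G)\geq\psi(H)+|H|(\psi(G/H)-1)$.

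For the upper bound I use (2) in the same way. Here $\psi''(G/H,1)=\frac{\psi(G/H)-1}{m^2-1}$ and $|G|^2-|H|^2=|H|^2(m^2-1)$. Inequality (2) becomes
\begin{equation}
\frac{\psi(G)-\psi(H)}{|H|^2(m^2-1)}\leq\frac{\psi(G/H)-1}{m^2-1}.\nonumber
\end{equation}
Multiplying by $|H|^2(m^2-1)>0$ gives $\psi(G)\leq\psi(H)+|H|^2(\psi(G/H)-1)$.

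There is no real obstacle in this argument. The only points needing care are the correct factorizations of the denominators and remembering to treat the case $H=G$ separately.
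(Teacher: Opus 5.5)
Your proposal is correct and follows the paper's proof exactly: both apply inequalities (1) and (2) of Theorem 1.1 to $(G,H)$ and clear the positive denominators $|G|-|H|=|H|(m-1)$ and $|G|^2-|H|^2=|H|^2(m^2-1)$. You also treat the case $H=G$ separately. The paper passes over this case, since Theorem 1.1 assumes $H$ proper, so handling it is a harmless and slightly more careful addition.
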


We exemplify this case for ZM-groups, i.e. for finite groups with all Sylow subgroups cyclic. Such a group is of type
\begin{equation}
{\rm ZM}(m,n,r)=\langle a, b \mid a^m = b^n = 1,
\hspace{1mm}b^{-1} a b = a^r\rangle, \nonumber
\end{equation}where the triple $(m,n,r)$ satisfies the conditions
\begin{equation}
{\rm gcd}(m,n)={\rm gcd}(m,r-1)=1 \mbox{ and } r^n
\equiv 1 \hspace{1mm}({\rm mod}\hspace{1mm}m). \nonumber
\end{equation}Clearly, $|{\rm ZM}(m,n,r)|=mn$ and ${\rm ZM}(m,n,r)$ is a certain semidirect product of $C_m$ by $C_n$. Note that $Z({\rm ZM}(m,n,r))=\langle b^d\rangle$, where $d$ is the multiplicative order of $r$ modulo $m$. The subgroups of ${\rm ZM}(m,n,r)$ are as follow. Set
\begin{equation}
L=\left\{(m_1,n_1,s)\in\mathbb{N}^3 \hspace{1mm}\mid\hspace{1mm}
m_1|m,\hspace{1mm} n_1|n,\hspace{1mm} s<m_1,\hspace{1mm}
m_1|s\frac{r^n-1}{r^{n_1}-1}\right\}.\nonumber
\end{equation}Then there is a bijection between $L$ and the subgroup lattice
$L({\rm ZM}(m,n,r))$ of ${\rm ZM}(m,n,r)$, namely the function
that maps a triple $(m_1,n_1,s)\in L$ into the subgroup $H_{(m_1,n_1,s)}$ defined by\vspace{-1mm}
\begin{equation}
H_{(m_1,n_1,s)}=\bigcup_{k=1}^{\frac{n}{n_1}}\alpha(n_1,
s)^k\langle a^{m_1}\rangle=\langle a^{m_1},\alpha(n_1, s)\rangle,\vspace{-1mm}\nonumber
\end{equation}where $\alpha(x, y)=b^xa^y$, for all $0\leq x<n$ and $0\leq y<m$.
Note that:
\begin{itemize}
\item[-] $|H_{(m_1,n_1,s)}|=\frac{mn}{m_1n_1}$\,, for any $s$ satisfying $(m_1,n_1,s)\in L$;
\item[-] $H_{(m_1,n_1,s)}$ is cyclic if and only if $\frac{m}{m_1}\mid r^{n_1}-1$;
\item[-] two subgroups of ${\rm ZM}(m,n,r)$ are conjugate if and only if they have the same order.
\end{itemize}

\begin{corollary}
Under the above notation, we have
\begin{equation}
\psi(C_m)+m(\psi(C_n)-1)\leq\psi({\rm ZM}(m,n,r))<\psi(C_m)+m^2(\psi(C_n)-1).\nonumber
\end{equation}Moreover, the first inequality becomes an equality if and only if $d=n$ and for all proper divisors $m_1$ of $m$ and $n_1$ of $n$, we have $m_1\nmid r^{n_1}-1$.\footnote{Note that this happens, for example, when $m$ or $n$ are primes.}
\end{corollary}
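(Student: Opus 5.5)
The plan is to apply Corollary 1.2 to $G={\rm ZM}(m,n,r)$ with $H=\langle a\rangle\cong C_m$. This subgroup is normal and abelian, and $G/H\cong C_n$ is abelian. Then $\psi(H)=\psi(C_m)$, $|H|=m$ and $\psi(G/H)=\psi(C_n)$, so both displayed inequalities follow at once, the second one for now only in its non-strict form. Throughout I assume $m>1$, so that $H$ is non-trivial. For $m=1$ the strict inequality is false, since both sides equal $\psi(C_n)$, so this assumption is needed. I also assume $n>1$, so that $H$ is proper.

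\textbf{Strictness of the upper bound.} By Theorem 1.1, equality in (2) forces one of two structures. The first is $G\cong C_{p^k}$ with $H$ a non-trivial proper subgroup. Then $m=|H|$ and $n=|G/H|$ are both non-trivial powers of $p$, which contradicts ${\rm gcd}(m,n)=1$. The second is $G\cong Q_{2^k}$. This is impossible because the Sylow $2$-subgroups of a ZM-group are cyclic. Hence the upper inequality is strict.

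\textbf{Equality case of the lower bound.} By Theorem 1.1, equality holds iff $(G,H)$ is an equal order pair with $H$ isolated. I would first reduce this to a single condition: for every $x\in G\setminus H$, $\langle x\rangle\cap H=1$.
\begin{itemize}
\item Isolation is exactly this condition.
\item Conversely, if $\langle x\rangle\cap H=1$ for all $x\notin H$, then $o(x)=o(xH)$ for all $x\notin H$. Since every element of the coset $xH$ lies outside $H$ and has the same image $xH$ in $G/H$, all elements of $xH$ have the same order, so the pair is automatically an equal order pair.
\end{itemize}
Thus equality holds iff no cyclic subgroup $K\leq G$ satisfies both $K\not\subseteq H$ and $K\cap H\neq 1$.

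I would then use the parametrization of $L(G)$ to describe such $K$.
\begin{itemize}
\item Write $K=H_{(m_1,n_1,s)}$. Its intersection with $\langle a\rangle$ is $\langle a^{m_1}\rangle$; this step needs a brief check from the coset decomposition of $H_{(m_1,n_1,s)}$. So $K\cap H\neq1$ iff $m_1<m$.
\item $K\not\subseteq H$ iff $K$ contains an element $b^{n_1}a^s$ with $n_1<n$, i.e.\ iff $n_1<n$.
\item $K$ is cyclic iff $\frac{m}{m_1}\mid r^{n_1}-1$.
\end{itemize}
Setting $m_1'=m/m_1$, a proper divisor greater than $1$, equality therefore holds iff $m_1'\nmid r^{n_1}-1$ for all such $m_1'$ and all proper divisors $n_1$ of $n$. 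Two boundary cases need care.
\begin{itemize}
\item The divisor $n_1=1$ is harmless, because ${\rm gcd}(m,r-1)=1$.
\item The choice $m_1'=m$ corresponds to the cyclic subgroup $\langle a,b^{n_1}\rangle$. It shows that we also need $m\nmid r^{n_1}-1$ for every proper divisor $n_1$ of $n$, which is exactly $d=n$.
\end{itemize}
Equivalently, if $d<n$ then $b^d$ is central and lies outside $H$, and $(b^da)^{n/d}=a^{n/d}\neq1$ since ${\rm gcd}(m,n)=1$. This gives a concrete element witnessing the failure. Combining the two conditions gives precisely the statement of the corollary.

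The main obstacle I expect is this last bookkeeping step. It requires correctly identifying $K\cap\langle a\rangle$ and the condition $K\not\subseteq\langle a\rangle$ from the triple $(m_1,n_1,s)$. It also requires matching the indexing of the paper's stated condition, in which $m_1$ plays the role of $m/m_1$ in the parametrization and $d=n$ covers the case $m_1'=m$.
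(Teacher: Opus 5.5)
Your proposal is correct and follows the paper's route: apply Corollary 1.2 to $H=\langle a\rangle$, rule out the equality cases of (2) from Theorem 1.1 (the paper simply says a ZM-group is not a $p$-group, which is what your $\gcd(m,n)=1$ argument shows), and reduce equality in (1) to $H$ being isolated, where your bookkeeping with the triples $(m_1,n_1,s)$ reproduces the paper's proof of d)$\Leftrightarrow$e) in Lemma 2.1. You are also right that $m,n>1$ must be assumed, since otherwise the strict inequality fails; the paper leaves this hypothesis tacit.
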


Finally, we formulate a natural question related to our main result.

\bigskip\noindent{\bf Open problem.} Study equal order pairs $(G,H)$ where $H$ is an isolated subgroup. 

As shown in Theorem 1.1, such pairs arise precisely when equality holds in (1). Therefore, investigating their properties and structural classification constitutes a direct and meaningful extension of the present work.
\bigskip

Most of our notation is standard and will usually not be repeated here. Elementary notions and results on groups can be found in \cite{18}.

\section{Proofs of the main results}

First of all, we study equal order pairs $(G,H)$ satisfying
\begin{equation}
o(x)=o(xH), \forall\, x\in G\setminus H,
\end{equation}and
\begin{equation}
o(x)=o(xH)|H|, \forall\, x\in G\setminus H,
\end{equation}respectively. These correspond to the cases when we have equality in the inequalities (1) and (2).

We observe that the condition (3) is equivalent to
\begin{equation}
\langle x\rangle\cap H=1, \forall\, x\in G\setminus H,\nonumber
\end{equation}i.e. to the fact that $H$ is an isolated subgroup. Clearly, $(C_p\times C_p,C_p\times 0)$ with $p$ prime is an equal order pair satisfying (3), while $(D_8,Z(D_8))$ and $(Q_8,Z(Q_8))$ are equal order pairs that does not satisfy (3).

An interesting example of such a pair is given by the following lemma. 

\begin{lemma}
Let $G={\rm ZM}(m,n,r)$ and $H=\langle a\rangle$. Denote by $d$ the multiplicative order of $r$ modulo $m$. Then the following conditions are equivalent: 
\begin{itemize}
\item[{\rm a)}] $(G,H)$ is an equal order pair.
\item[{\rm b)}] $(G,H)$ is a Camina pair.
\item[{\rm c)}] $G$ is a Frobenius group with kernel $H$.
\item[{\rm d)}] $H$ is an isolated subgroup of $G$.
\item[{\rm e)}] $d=n$ and $m_1\nmid r^{n_1}-1$ for all proper divisors $m_1$ of $m$ and $n_1$ of $n$.
\end{itemize}
\end{lemma}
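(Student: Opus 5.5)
The plan is to prove the cycle c) $\Rightarrow$ b) $\Rightarrow$ a) $\Rightarrow$ d) $\Rightarrow$ c), and then separately show c) $\Leftrightarrow$ e) by elementary number theory. Throughout we use that $H=\langle a\rangle\cong C_m$ is normal, $G/H\cong C_n$, $\gcd(m,n)=1$, and $\langle b\rangle\cap H=1$. Every element outside $H$ has the form $x=b^ka^y$ with $k\not\equiv 0 \pmod n$. Put $t_0=n/\gcd(n,k)=o(xH)$. A direct computation with $a^b=a^r$ gives
\begin{equation}
x^{t_0}=a^{yS_k}, \qquad S_k=\sum_{i=0}^{t_0-1}r^{ki}, \nonumber
\end{equation}
up to the obvious sign/exponent conventions. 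Hence $o(x)=t_0\cdot o(a^{yS_k})$. Also $kt_0$ is a multiple of $n$, so $r^{kt_0}\equiv 1 \pmod m$, and therefore $(r^k-1)S_k\equiv 0 \pmod m$.

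For c) $\Rightarrow$ b), take $x\notin H$ in the Frobenius group. Then $C_H(x)=1$, so the map $h\mapsto [x,h]$ is injective from $H$ to $H$, hence bijective. Consequently $xH=\{x^h: h\in H\}$ lies in the conjugacy class of $x$, which is the Camina condition. For b) $\Rightarrow$ a), if $xH\subseteq x^G$ then all elements of $xH$ are conjugate to $x$ and so have the same order. For a) $\Rightarrow$ d), the coset $xH=b^ka^yH$ contains $b^k$, whose order is exactly $t_0=o(xH)$ because $\langle b\rangle\cap H=1$. An equal order pair therefore forces $o(x)=o(xH)$ for all $x\notin H$, which is condition (3), i.e. $H$ is isolated.

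For d) $\Rightarrow$ c), argue by contraposition. Suppose $G$ is not Frobenius with kernel $H$. Then some $b^k$ with $0<k<n$ centralizes a nontrivial element of $H$, so some prime $p\mid m$ divides $r^k-1$. Then $r^k\equiv 1\pmod p$, and so $S_k\equiv t_0 \pmod p$. Since $t_0\mid n$ and $\gcd(m,n)=1$, we get $p\nmid S_k$. Taking $y=1$, the element $x=b^ka$ satisfies $x^{t_0}=a^{S_k}\neq 1$. Thus $\langle x\rangle\cap H\neq 1$, and $H$ is not isolated.

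The step I expect to need the most care is c) $\Leftrightarrow$ e), mainly because of how ``proper divisor'' is to be read. Condition c) says $\gcd(r^k-1,m)=1$ for all $0<k<n$. Since $r^n\equiv1\pmod m$, we have $\gcd(r^k-1,m)=\gcd(r^{\gcd(k,n)}-1,m)$. So c) is equivalent to the statement that no divisor $m_1>1$ of $m$ divides $r^{n_1}-1$ for any divisor $n_1<n$ of $n$.
\begin{itemize}
\item The case $m_1=m$ is exactly the statement $d=n$.
\item The cases $1<m_1<m$ are the second clause of e).
\end{itemize}
I will read ``proper divisor'' of $m$ as $m_1\neq 1,m$ and of $n$ as $n_1\neq n$, which is what makes $d=n$ a separate, non-redundant clause. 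With that reading both directions are immediate. The one point to check is that every prime $p\mid m$ falls under one of the two clauses: either $p<m$, or $p=m$, in which case it is covered by $d=n$.
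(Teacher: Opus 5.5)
Your proof is correct, but it is organized differently from the paper's. The paper proves a)$\Rightarrow$b) using a ZM-specific fact: subgroups of equal order are conjugate, so $\langle b^xa^y\rangle$ is conjugate to $\langle b^x\rangle$, which forces $b^xa^y$ to be conjugate to $b^x$. It then delegates b)$\Leftrightarrow$c) to the cited theorem [6] on Camina pairs with $\gcd(|H|,[G:H])=1$, and argues c)$\Rightarrow$d) via $C_G(a^y)\le H$. Finally, it reads d)$\Leftrightarrow$e) off the parametrization $L$ of the subgroup lattice: the cyclic subgroups outside $H$ are the $H_{(m_1,n_1,s)}$ with $n_1\ne n$ and $\frac{m}{m_1}\mid r^{n_1}-1$, and each meets $H$ in $\langle a^{m_1}\rangle$.

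You instead close the cycle c)$\Rightarrow$b)$\Rightarrow$a)$\Rightarrow$d)$\Rightarrow$c) by direct computation. The ingredients are the bijectivity of $h\mapsto[x,h]$ when $C_H(x)=1$, the element $b^k\in xH$ of order $o(xH)$, and the power formula $x^{t_0}=a^{yS_k}$ with $S_k\equiv t_0\not\equiv 0\pmod p$. You then attach e) to c) rather than to d), through the fixed-point-free criterion $\gcd(r^k-1,m)=1$ together with $\gcd(r^k-1,m)=\gcd(r^{\gcd(k,n)}-1,m)$.

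Your route is self-contained: it needs neither [6], nor conjugacy of equal-order subgroups, nor the lattice description. It also proves d)$\Rightarrow$c) explicitly. The paper's c)$\Leftrightarrow$d) paragraph only argues c)$\Rightarrow$d); the converse is recoverable from the trivial observation that an isolated $H$ already makes $(G,H)$ an equal order pair. The paper's route is shorter once the structure of ZM-groups is taken as known, and it exhibits e) directly as a statement about which cyclic subgroups meet $H$.

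Your reading of ``proper divisor'' ($1<m_1<m$ and $n_1<n$, with $d=n$ covering the case $m_1=m$) agrees with what the paper's condition ``$n_1\ne n$ and $\frac{m}{m_1}\mid r^{n_1}-1$ imply $m_1=m$'' yields.
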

\begin{proof}
\begin{description}
\item[{\rm a)} $\Leftrightarrow$ {\rm b)}] Assume that $(G,H)$ is an equal order pair and let $b^xa^y\in G$, where $1\leq x<n$ and $0\leq y<m$. Since $b^xa^y\not\in H$, we have $o(b^xa^y)=o(b^x)$, which implies that $|\langle b^xa^y\rangle|=|\langle b^x\rangle|$. Then there are $0\leq u<n$ and $0\leq v<m$ such that $\langle b^xa^y\rangle=\langle b^x\rangle^{\alpha(u,v)}$. This leads to $b^xa^y=(b^{xz})^{\alpha(u,v)}$ for some $0\leq z<o(b^x)$ with $(z,o(b^x))=1$. It follows that
\begin{equation}
b^xa^y=b^{xz}a^{v(1-r^{xz})}\nonumber    
\end{equation}and so $b^x=b^{xz}$, i.e. $z=1$. Then $b^xa^y$ is conjugate to $b^x$ and thus $(G,H)$ is a Camina pair.
\item[{\rm b)} $\Leftrightarrow$ {\rm c)}] It follows by \cite{6}, since $|H|$ and $[G:H]$ are coprime.
\item[{\rm c)} $\Leftrightarrow$ {\rm d)}] It suffices to observe that if there is $1\leq y<m$ with
\begin{equation}
a^y\in\langle b^ua^v\rangle\cap H \mbox{ for some } 1\leq u<n \mbox{ and } 0\leq v<m,\nonumber 
\end{equation}then 
\begin{equation}
b^ua^v\in C_G(a^y)\leq H,\nonumber 
\end{equation}a contradiction.
\item[{\rm d)} $\Leftrightarrow$ {\rm e)}] For every triple $(m_1,n_1,s)\in L$, we have $H_{(m_1,n_1,s)}\cap H=\langle a^{m_1}\rangle$. Then $H$ is an isolated subgroup of $G$ if and only if $H_{(m_1,n_1,s)}\cap H=1$ for all triples $(m_1,n_1,s)\in L$ satisfying $n_1\neq n$ and $\frac{m}{m_1}\mid r^{n_1}-1$, i.e. if and only if
\begin{equation}
n_1\neq n \mbox{ and } \frac{m}{m_1}\mid r^{n_1}-1 \mbox{ imply that } m_1=m.\nonumber 
\end{equation}Obviously, this is equivalent to the condition e).
\end{description}
\end{proof}

Next we describe equal order pairs $(G,H)$ with the property (4). 

\begin{lemma}
An equal order pair $(G,H)$ satisfies (4) if and only if either $G\cong C_{p^n}$ with $p$ prime, $n\geq 2$, and $H=p^{n-i}G\cong C_{p^i}$, $i=1,\dots,n-1$, or $G\cong Q_{2^n}$ with $n\geq 3$ and $H=Z(G)\cong C_2$.\footnote{Note that these groups $G$ are exactly the finite groups having a unique minimal subgroup - see e.g. (4.4) of \cite{18}, II.}
\end{lemma}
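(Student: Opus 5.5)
The plan is to extract from (4) the single structural fact that every cyclic subgroup not contained in $H$ contains $H$. From there, a prime-order argument will show that $G$ is a $p$-group with a unique minimal subgroup, and then (4.4) of \cite{18}, II lists the possible groups. Throughout I assume $H\neq 1$, as in Theorem 1.1; for $H=1$, (4) holds trivially for every $G$. I also note that (4) by itself already forces $(G,H)$ to be an equal order pair. Indeed, for $x\in G\setminus H$ and $h\in H$ we have $xh\notin H$, so $o(xh)=o(xhH)|H|=o(xH)|H|=o(x)$. Hence in the converse direction only (4) has to be checked.

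\emph{Key observation.} Let $x\in G\setminus H$ and put $k=o(xH)$. Then $x^k\in H$ and $o(x^k)=o(x)/k=|H|$ by (4). So $\langle x^k\rangle$ is a subgroup of $H$ of order $|H|$, which gives $H=\langle x^k\rangle$. Thus $H$ is cyclic and $H\leq\langle x\rangle$ for every $x\notin H$.

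\emph{Reduction to a $p$-group.} First, every subgroup $P$ of prime order lies in $H$. Otherwise a generator $x$ of $P$ lies outside $H$, so $1\neq H\leq P$, which forces $H=P$, a contradiction. Next, let $r$ be a prime dividing $[G:H]$. A Sylow $r$-subgroup of $G$ is not contained in $H$, so there is an $r$-element $y\notin H$. Then $|H|$ divides $o(y)$, so $|H|$ is a power of $r$. If a second prime $s$ divided $[G:H]$, then $|H|$ would also be a power of $s$, contradicting $H\neq1$. Hence $G/H$ and $H$ are both $r$-groups, and $G$ is an $r$-group, say with $r=p$. All subgroups of order $p$ lie in the cyclic group $H$, so $G$ has a unique minimal subgroup. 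By (4.4) of \cite{18}, II, $G$ is then cyclic or generalized quaternion.

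\emph{Identifying $H$ and the converse.}
\begin{itemize}
\item If $G\cong C_{p^n}$, then $H$ is a proper nontrivial subgroup, so $H=p^{n-i}G\cong C_{p^i}$ with $1\leq i\leq n-1$, and this forces $n\geq2$. Conversely, subgroups of a cyclic $p$-group form a chain. So for $x\notin H$ we get $H<\langle x\rangle$, and $o(x)=[\langle x\rangle:H]\,|H|=o(xH)|H|$.
\item If $G\cong Q_{2^n}$ with $n\geq3$, the step I expect to need the most care is pinning down $H$. Take two elements of order $4$ outside the maximal cyclic subgroup that generate distinct cyclic subgroups of order $4$; both subgroups must contain $H$. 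Since their intersection is $Z(G)$, we get $H\leq Z(G)\cong C_2$, and so $H=Z(G)$ (possibly after checking separately the case where $H$ could contain such elements). Conversely, every $x\notin Z(G)$ has order $\geq4$. The unique involution $x^{o(x)/2}$ is central, so $o(xZ(G))=o(x)/2$, and (4) holds.
\end{itemize}
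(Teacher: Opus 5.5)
Your proof is correct. Its first step is exactly the paper's: for $x\notin H$ with $k=o(xH)$, you get $H=\langle x^k\rangle\le\langle x\rangle$. After that the two routes diverge.

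The paper notes that this makes $H$ a breaking point of the poset of cyclic subgroups. It then invokes Theorem 1.1 of \cite{19}, which directly yields that $G$ is a cyclic $p$-group or a generalized quaternion $2$-group. It asserts $H=Z(G)$ in the latter case without further comment.

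You instead argue by hand. Every subgroup of prime order lies in $H$, and a Sylow argument forces $G$ to be a $p$-group. You then need only the classical theorem (4.4) of \cite{18}, II, on $p$-groups with a unique subgroup of order $p$, which is the fact recorded in the paper's footnote.

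The paper's version is shorter but rests on a specialized result. Yours is self-contained modulo a textbook theorem. It also makes explicit two points the paper leaves tacit:
\begin{itemize}
\item $H\neq 1$ is genuinely needed, since for $H=1$ condition (4) holds for every $G$;
\item (4) alone already implies the equal order property, which is what makes the converse ``obvious''.
\end{itemize}

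The one thing to finish is the hedge in the quaternion case. Write $\langle a\rangle$ for the cyclic subgroup of index $2$.
\begin{itemize}
\item If $H\le\langle a\rangle$, your argument with $b$ and $ab$ applies verbatim: both lie outside $H$, and $\langle b\rangle\cap\langle ab\rangle=Z(G)$.
\item If $H\not\le\langle a\rangle$, then, $H$ being cyclic, $H=\langle a^ib\rangle$ has order $4$. Now $a\notin H$ while $H\not\le\langle a\rangle$, contradicting your key observation.
\end{itemize}
This second case really does occur as a candidate. In $Q_8$ with $\langle a\rangle=\langle i\rangle$ and $H=\langle j\rangle$, you cannot find two elements outside both $\langle a\rangle$ and $H$ generating distinct cyclic subgroups. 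So state this check explicitly rather than leaving it as ``possibly''.
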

\begin{proof}
Assume that the equal order pair $(G,H)$ satisfies (4). Let $x\in G\setminus H$ and denote $n=o(xH)$. Then 
\begin{equation}
o(x)=o(xH)o(x^n)\nonumber
\end{equation}and so $o(x^n)=|H|$, implying that $H=\langle x^n\rangle\subseteq\langle x\rangle$. Thus $H$ is a breaking point in the poset of cyclic subgroups of $G$, i.e. 
\begin{equation}
\mbox{for every } X\in C(G), \mbox{ we have } X\leq H \mbox{ or } H\leq X.\nonumber
\end{equation}By Theorem 1.1 of \cite{19}, it follows that $G$ is either a cyclic $p$-group of order at least $p^2$ or a generalized quaternion $2$-group. In the first case $H$ can be any proper non-trivial subgroup of $G$, while in the second case $H$ must be the unique subgroup of order $2$ of $G$.

The converse is obvious.
\end{proof}

We are now able to prove Theorem 1.1 and its consequences.

\begin{proof}[Proof of Theorem 1.1.] 
Let $m=[G:H]$ and $G/H=\{x_1H=H,x_2H,\dots,x_mH\}$. Then we have
\begin{equation}
\psi(G)=\psi(H)+\dd\sum_{i=2}^m\psi(x_iH)=\psi(H)+\dd\sum_{i=2}^m\dd\sum_{h\in H}o(x_ih).
\end{equation}For every $i=2,\dots,m$, let $y_i,z_i\in x_iH$ such that
\begin{equation}
o(y_i)=\min\{o(x_ih)\mid h\in H\} \mbox{ and } o(z_i)=\max\{o(x_ih)\mid h\in H\}.\nonumber
\end{equation}We obtain
\begin{equation}
o(y_i)|H|\leq\dd\sum_{h\in H}o(x_ih)\leq o(z_i)|H|, \forall\, i=2,\dots,m\nonumber
\end{equation}and so
\begin{equation}
\dd\sum_{i=2}^mo(y_i)\leq\dd\frac{\psi(G)-\psi(H)}{|H|}\leq\dd\sum_{i=2}^mo(z_i)
\end{equation}by (5). Since 
\begin{equation}
o(y_iH)\leq o(y_i) \mbox{ and } o(z_i)\leq o(z_iH)|H|, \forall\, i=2,\dots,m,\nonumber
\end{equation}(6) leads to
\begin{equation}
\psi(G/H)-1=\sum_{i=2}^m\o(y_iH)\leq\dd\frac{\psi(G)-\psi(H)}{|H|}\leq|H|\sum_{i=2}^mo(z_iH)=|H|(\psi(G/H)-1),\nonumber
\end{equation}which means
\begin{equation}
o(G,H)\geq o(G/H,1) \mbox{ and } \psi''(G,H)\leq\psi''(G/H,1),\nonumber
\end{equation}as desired.

Moreover, $o(G,H)=o(G/H,1)$ if and only if for every $i=2,\dots,m$, we have
\begin{equation}
o(y_i)=o(y_ih),\, \forall\, h\in H, \mbox{ and } o(y_i)=o(y_iH).\nonumber
\end{equation}Since each $y\in G\setminus H$ belongs to a coset $y_iH$ for some $i\in\{2,\dots,m\}$, it is easy to see that the above conditions are equivalent to
\begin{equation}
o(y)=o(yh),\, \forall\, h\in H, \mbox{ and } o(y)=o(yH).\nonumber
\end{equation}Thus $(G,H)$ is an equal order pair satisfying (3).

Similarly, $\psi''(G,H)=\psi''(G/H,1)$ if and only if for every $i=2,\dots,m$, we have
\begin{equation}
o(z_i)=o(z_ih),\, \forall\, h\in H, \mbox{ and } o(z_i)=o(z_iH)|H|,\nonumber
\end{equation}or equivalently for every $z\in G\setminus H$,
\begin{equation}
o(z)=o(zh),\, \forall\, h\in H, \mbox{ and } o(z)=o(zH)|H|,\nonumber
\end{equation}that is $(G,H)$ is an equal order pair satisfying (4). Now the conclusion is obtained from Lemma 2.2, completing the proof.
\end{proof}

\begin{proof}[Proof of Corollary 1.2.] 
From inequality (1), we have
\begin{equation}
\frac{\psi(G)-\psi(H)}{|G|-|H|}\geq\frac{\psi(G/H)-1}{|G/H|-1}\,,\nonumber
\end{equation}that is,
\begin{equation}
\psi(G)\geq\psi(H)+|H|(\psi(G/H)-1).\nonumber
\end{equation}Similarly, from inequality (2), we obtain
\begin{equation}
\frac{\psi(G)-\psi(H)}{|G|^2-|H|^2}\leq\frac{\psi(G/H)-1}{|G/H|^2-1}\,,\nonumber
\end{equation}that is,
\begin{equation}
\psi(G)\leq\psi(H)+|H|^2(\psi(G/H)-1).\nonumber
\end{equation}
\end{proof}

\begin{proof}[Proof of Corollary 1.3.] 
The inequalities are obtained by applying Corollary 1.2 to $G={\rm ZM}(m,n,r)$ and $H=\langle a\rangle\cong C_m$. Note that the second one cannot be an equality because a ZM-group is not a $p$-group and therefore it cannot have a unique minimal subgroup. The first one become an equality if and only if $(G,H)$ is an equal order pair with $H$ an isolated subgroup, and the conclusion follows by Lemma 2.1. 
\end{proof}

\bigskip\noindent{\bf Acknowledgements.} The author is grateful to the reviewer for remarks which improve the previous version of the paper.
\bigskip

\bigskip\noindent{\bf Funding.} The author did not receive support from any organization for the submitted work.

\bigskip\noindent{\bf Conflicts of interests.} The author declares that he has no conflict of interest.

\bigskip\noindent{\bf Data availability statement.} My manuscript has no associated data.

\vspace*{3ex}\small

\hfill
\begin{minipage}[t]{5cm}
Marius T\u arn\u auceanu \\
Faculty of  Mathematics \\
``Al.I. Cuza'' University \\
Ia\c si, Romania \\
e-mail: {\tt tarnauc@uaic.ro}
\end{minipage}

\end{document}